\def\td{tree-decom\-po\-si\-tion}
\newcommand{\indsub}{\subseteq_{\mathrm i}}
\newcommand{\tw}{\rm tw}
\newcommand{\F}{\mathcal F}
\newcommand{\G}{\mathcal G}
\newcommand{\V}{\mathcal V}
\newcommand{\Q}{\mathcal Q}
\newcommand{\R}{\mathcal R}
\newcommand{\PP}{\mathcal P}
\newcommand{\sub}{\subseteq}
\newcommand{\comment}[1]{}
\newtheorem{theorem}{Theorem}
\newtheorem{lemma}[theorem]{Lemma}
\newtheorem{corollary}[theorem]{Corollary}
\theoremstyle{definition}
\def\?#1{\vadjust{\vbox to 0pt{\vss\vskip-8pt\leftline{%
     \llap{\hbox{\vbox{\pretolerance=-1
     \doublehyphendemerits=0\finalhyphendemerits=0
     \hsize16truemm\tolerance=10000\small
     \lineskip=0pt\lineskiplimit=0pt
     \rightskip=0pt plus16truemm\baselineskip8pt\noindent
     \hskip0pt        
     #1\endgraf}\hskip7truemm}}}\vss}}}
     \title{In absence of long chordless cycles, large tree-width becomes a local phenomenon}
     \author{Daniel Wei\ss{}auer}
     \date{}
\begin{document}
     	\maketitle
     
	\begin{abstract}
	We prove that, for all~$\ell$ and~$s$, every graph of sufficiently large tree-width contains either a complete bipartite graph~$K_{s,s}$ or a chordless cycle of length greater than~$\ell$.
\end{abstract}

     \begin{section}{Introduction}
     
%
     
     In an effort to make the statement in the title precise, let us call a graph parameter~$P$ \emph{global} if there is a constant~$c$ such that for all~$k$ and~$r$ there exists a graph~$G$ for which every subgraph~$H$ of order at most~$r$ satisfies $P(H) < c$, while $P(G) > k$. The intention here is that~$P$ being small, even bounded by a constant, on subgraphs of bounded order does not provide a bound on~$P(G)$.     
     
      Tree-width is a global parameter (we may take $c = 2$), as is the chromatic number (with $c = 3$). Indeed, it is a classic result of Erd\H{o}s~\cite{erdos59girth} that for all~$k$ and~$r$ there exists a graph of chromatic number~$>k$ for which every subgraph on at most~$r$ vertices is a forest.
      
          It is well-known (see~\cite{DiestelBook16noEE}) that the situation changes when we restrict ourselves to \emph{chordal graphs}, graphs without chordless cycles of length~$\geq 4$:
        \begin{equation} \label{tw chordal graphs}
    	\text{$\forall k: \,$ Every $K_{k+1}$-free chordal graph has tree-width~$<\! k$.}
     \end{equation}
     Hence the only obstruction for a chordal graph to have small tree-width is the presence of a large clique. Since the chromatic number of a graph is at most its tree-width plus one~(\cite{DiestelBook16noEE}), the same is true for the chromatic number. In particular, tree-width and chromatic number are \emph{local} parameters for the class of chordal graphs.

      In 1985, Gy\'{a}rf\'{a}s~\cite{gyarfas87conj} made a famous conjecture which implies that chromatic number is a local parameter\footnote{Indeed, in terms of our earlier definition, (\ref{long hole chi bound}) implies that given any integer~$c$, there exists a~$k$ such that every \mbox{$\ell$-chordal} graph of chromatic number~$>k$ has a subgraph of order~$ \leq c$ and chromatic number~$\geq c$.} for the larger class of \emph{\mbox{$\ell$-chordal}} graphs, those which have no chordless cycle of length~$> \ell$:
     \begin{equation} \label{long hole chi bound}
     \text{$\forall \ell, r \; \exists k: \,$ Every $K_r$-free \mbox{$\ell$-chordal} graph is $k$-colourable.}
     \end{equation}

		This conjecture remained unresolved for 30 years and was proved only recently by Chudnovsky, Scott and Seymour~\cite{longholechibound}. In view of~(\ref{tw chordal graphs}), it is tempting to think that an analogue of~(\ref{long hole chi bound}) might hold with tree-width in place of chromatic number. Complete bipartite graphs, however, are examples of triangle-free \mbox{4-chordal} graphs of large tree-width. Therefore a verbatim analogue of~(\ref{long hole chi bound}) is not possible and any graph whose presence we can hope to force by assuming \mbox{$\ell$-chordality} and large tree-width will be bipartite. 
%
%
%
		 
		 		 On the positive side, Bodlaender and Thilikos~\cite{bodlaender97chordal} showed that every \emph{star} can be forced as a subgraph in \mbox{$\ell$-chordal} graphs by assuming large tree-width (see Section~\ref{sec: proof}). However, since stars have tree-width~1, this does not establish locality of tree-width in the sense of our earlier definition. Our main result is that in fact \emph{any} bipartite graph can be forced as a subgraph:

%
          
	     \begin{theorem} \label{main theorem}
     	Let $\ell \geq 4$ be an integer and~$F$ a graph. Then~$F$ is bipartite if and only if there exists an integer~$k$ such that every \mbox{$\ell$-chordal} graph of tree-width~$\geq k$ contains~$F$ as a subgraph.
     \end{theorem}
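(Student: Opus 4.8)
The plan is to treat the two implications separately. The backward implication is elementary, while the forward implication reduces, by a trivial embedding, to the dichotomy announced in the abstract, which I take as given: for all $\ell$ and $s$ there is a $k$ such that every graph of tree-width at least $k$ contains $K_{s,s}$ or a chordless cycle of length greater than $\ell$.

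First I would dispose of the backward direction by contraposition. Suppose $F$ is not bipartite, so that $F$ contains an odd cycle and hence is not a subgraph of any bipartite graph. The complete bipartite graphs $K_{s,s}$ are bipartite; moreover every cycle of length at least $6$ in $K_{s,s}$ has a chord (any two non-consecutive vertices lying on opposite sides are adjacent), so $K_{s,s}$ is $4$-chordal and therefore $\ell$-chordal for every $\ell \geq 4$. Since $\tw(K_{s,s}) = s$, the graph $K_{k,k}$ is, for each $k$, an $\ell$-chordal graph of tree-width at least $k$ that does not contain $F$. Thus no integer $k$ can force $F$ in $\ell$-chordal graphs of large tree-width, as required. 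This is exactly the place where the hypothesis $\ell \geq 4$ enters, since it is what makes the $K_{s,s}$ admissible witnesses.

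For the forward direction, suppose $F$ is bipartite with parts $A$ and $B$, and put $s := \max(|A|,|B|)$, so that $F \sub K_{s,s}$. I would then invoke the dichotomy for this $s$ and the given $\ell$ to obtain a $k$ with the stated property. If $G$ is $\ell$-chordal with $\tw(G) \geq k$, the long-chordless-cycle alternative is excluded by $\ell$-chordality, so $G$ must contain $K_{s,s}$ and hence $F$. This $k$ witnesses the forward implication, completing the equivalence.

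The real obstacle is the dichotomy itself; granting it, the theorem is a two-line reduction, and the backward direction confirms that $K_{s,s}$ and its relatives are genuinely the only obstructions one can hope to force. Were I to prove the dichotomy from scratch, I would start from a tree-width obstruction -- a large bramble, or a large grid minor via the grid theorem -- and attempt to extract a long \emph{induced} path between two far-apart vertices, using $K_{s,s}$-freeness to keep neighbourhoods sparse enough that such a path can be closed into a cycle without acquiring chords. Controlling these chords, so that a genuinely long hole rather than merely a long induced path is produced, is the step I expect to be delicate.
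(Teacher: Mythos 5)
There is a genuine gap: the ``dichotomy announced in the abstract'' that you take as given \emph{is} the theorem. The abstract is simply a restatement of the forward implication of Theorem~\ref{main theorem} in the case $F = K_{s,s}$, not an independent prior result you may cite. Your backward direction (the $K_{k,k}$ witnesses) and your reduction of the forward direction to $F = K_{s,s}$ are both correct and match the paper's opening remarks in Section~\ref{sec: proof}, but these are the two easy observations; the entire substance of the proof is the claim you assume. As written, the argument is circular.

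Your closing sketch of how the dichotomy might be proved also diverges from what actually works. Starting from a grid minor and trying to extract a long induced path that closes into a chordless cycle is not the paper's route, and it is far from clear it can be carried out: a grid minor gives no control over chords in the host graph, and the delicate step you flag (preventing the long induced path from acquiring chords when closed into a cycle) is precisely where such an approach stalls. The paper instead argues in three stages: (i) an $\ell$-chordal graph of large tree-width contains a $k$-block, using the fact that $\ell$-chordal graphs of bounded maximum degree have bounded tree-width (Theorem~\ref{chordal maxdeg}) together with a structure theorem for graphs with no $k$-block (Theorem~\ref{structure blocks}); (ii) a $k$-block in an $\ell$-chordal graph yields a $(\leq 2\ell-3)$-subdivision of $K_m$, by analysing the chords that $\ell$-chordality forces among the many internally disjoint paths joining block vertices; (iii) a bounded-length subdivision of a large complete graph in an $\ell$-chordal graph forces $K_{s,s}$, via the K\"uhn--Osthus theorem on average degree and induced subdivisions. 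None of this machinery appears in your proposal, so the key content of the theorem remains unproved.
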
          
          
	This shows that tree-width is local for \mbox{$\ell$-chordal} graphs: Given any integer~$c$, there exists an integer~$k$ such that every \mbox{$\ell$-chordal} graph of tree-width~$\geq k$ has a subgraph isomorphic to~$K_{c,c}$, which has order~$2c$ and tree-width~$c$.
          
	Theorem~\ref{main theorem} also has an immediate application to an Erd\H{o}s-P\'{o}sa type problem. Kim and Kwon~\cite{kim18holes} showed that chordless cycles of length~$>3$ have the Erd\H{o}s-P\'{o}sa property:
	
	\begin{theorem}[\cite{kim18holes}] \label{kim kwon ep}
		For every integer~$k$ there exists an integer~$m$ such that every graph~$G$ either contains~$k$ vertex-disjoint chordless cycles of length~$>3$ or a set~$X$ of at most~$m$ vertices such that $G - X$ is chordal.
	\end{theorem}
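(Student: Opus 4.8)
The plan is to deduce Theorem~\ref{kim kwon ep} from Theorem~\ref{main theorem}, using the latter as the high-tree-width packing ingredient in the standard tree-width approach to Erd\H{o}s-P\'{o}sa problems. The form of Theorem~\ref{main theorem} I would use is its contrapositive: for all~$s$ and~$\ell$ there is an integer~$w$ such that every graph of tree-width~$\geq w$ either contains~$K_{s,s}$ as a subgraph or has a chordless cycle of length~$>\ell$. The point of passing to this formulation is that large tree-width does \emph{not} by itself force a hole --- a large clique is chordal --- so the contrapositive supplies exactly the dichotomy one needs: either the graph is far from chordal (it has a long hole) or it contains a dense bipartite pattern.

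The next step is to turn the $K_{s,s}$ alternative into disjoint holes. A $K_{s,s}$ subgraph need not be induced, so I would take~$s$ large and apply Ramsey's theorem inside each side of the bipartition: either both sides contain independent sets of size~$2k$, in which case~$G$ has an \emph{induced}~$K_{2k,2k}$ and hence~$k$ vertex-disjoint chordless $4$-cycles, or one side contains a clique of size~$c$, which can be made arbitrarily large by enlarging~$s$. Combined with the contrapositive above, this yields: for every~$k,\ell,c$ there is~$w$ such that every graph of tree-width~$\geq w$ contains~$k$ disjoint holes, or a chordless cycle of length~$>\ell$, or a clique~$K_c$. I would then run the usual covering argument under the assumption that~$G$ has no~$k$ disjoint holes, building a bounded hitting set and using~(\ref{tw chordal graphs}) to control the chordal, clique-like parts, which carry no holes and so need not be hit, while the long-hole alternative is used to peel off the remaining non-chordal structure.

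The main obstacle is precisely this last assembly. Large cliques have unbounded tree-width yet require no hitting, so one cannot simply bound the tree-width of~$G$ and finish by dynamic programming on a \td; and a single long chordless cycle is not a packing, while a maximal collection of disjoint long holes has unbounded total size and cannot serve as a hitting set on its own. Converting ``no~$k$ disjoint holes'' into a bounded hitting set in the simultaneous presence of large cliques and long holes is the technical heart of the problem, and this is exactly where Theorem~\ref{main theorem} does the work: it guarantees that any tree-width not already explained by a clique or by a long hole forces a packing of~$k$ holes, so that the covering argument only ever has to contend with these two explicitly understood obstructions, a simplification of the machinery of~\cite{kim18holes}.
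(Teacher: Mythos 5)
The paper does not prove Theorem~\ref{kim kwon ep}: it is quoted from Kim and Kwon~\cite{kim18holes} purely as background, and the only Erd\H{o}s--P\'{o}sa statement actually proved here is Corollary~\ref{erdos posa long holes excl}, the version for cycles of length~$>\ell$ with the host graphs restricted to $K_{s,s}$-free graphs. So there is no in-paper proof to compare yours against; the question is whether your proposed derivation from Theorem~\ref{main theorem} is complete. Its first half is fine: there is no circularity (the proof of Theorem~\ref{main theorem} nowhere uses Theorem~\ref{kim kwon ep}), and combining the contrapositive of Theorem~\ref{main theorem} with Ramsey's theorem inside the two sides of a $K_{s,s}$ subgraph does give, for all $k$ and $c$, a $w$ such that every graph of tree-width at least $w$ contains $k$ disjoint induced $4$-cycles, or one chordless cycle of length $>\ell$, or a clique $K_c$.

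The gap is the final assembly, which you correctly identify as the technical heart of the problem and then assert, rather than show, that Theorem~\ref{main theorem} resolves. It does not. The standard covering argument --- the one the paper runs for Corollary~\ref{erdos posa long holes excl} --- splits on tree-width: if it is large, Theorem~\ref{tree-width partition} yields $k$ disjoint subgraphs of large tree-width, each containing the target structure because the host is $K_{s,s}$-free; if it is small, a Helly argument on the subtrees $T_C$ of a width-$<r$ tree-decomposition yields a hitting set of size $r(k-1)$. For Theorem~\ref{kim kwon ep} the host graph is arbitrary, so a single large clique keeps the tree-width unbounded while contributing no holes: the small-tree-width branch is unavailable, and the large-tree-width branch, after your Ramsey step, may return only the alternative ``contains $K_c$'', which carries no information about where the holes lie or how to cover them. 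For a graph consisting of a huge clique with a handful of holes attached so that no $k$ of them are disjoint, your dichotomy produces the clique alternative and nothing else; the theorem is of course still true for such a graph, but your argument does not establish it. What is needed, and what Kim and Kwon actually construct, is a decomposition of $G$ into near-chordal pieces glued along bounded separators, over which the covering argument can be run while the chordal pieces are discarded via~(\ref{tw chordal graphs}); neither Theorem~\ref{main theorem} nor anything else in this paper supplies such a decomposition. As written, the proposal is a reduction of the theorem to its hardest step, not a proof.
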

	
	They also constructed, for every integer $\ell \geq 4$, a family of graphs showing that the analogue of Theorem~\ref{kim kwon ep} for chordless cycles of length~$>\ell$ fails. We complement their negative result by proving that the Erd\H{o}s-P\'{o}sa property \emph{does} hold when restricting the host graphs to graphs not containing~$K_{s,s}$ as a subgraph.

     \begin{corollary} \label{erdos posa long holes}
     	
     	For all~$\ell, s$ and~$k$ there exists an integer~$m$ such that every \mbox{$K_{s,s}$-free} graph~$G$ either contains~$k$ vertex-disjoint chordless cycles of length~$>\ell$ or a set~$X$ of at most~$m$ vertices such that $G - X$ is \mbox{$\ell$-chordal}.
     \end{corollary}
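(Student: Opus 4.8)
The plan is to establish the Erd\H{o}s--P\'{o}sa property for long holes (chordless cycles of length $>\ell$) inside $K_{s,s}$-free graphs by a tree-width dichotomy, with Theorem~\ref{main theorem} supplying the crucial threshold. Note first that Theorem~\ref{kim kwon ep} cannot be invoked directly: a disjoint union of many $5$-cycles (when $\ell\geq 5$ and $s\geq 3$) has unboundedly many disjoint holes yet no long hole at all, so there is no hope of bounding the total hole-packing number, and short holes must genuinely be told apart from long ones. The contrapositive of Theorem~\ref{main theorem}, however, yields an integer $t=t(\ell,s)$ such that every $K_{s,s}$-free graph of tree-width $\geq t$ contains a long hole. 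I will boost this single long hole to $k$ disjoint ones when the tree-width is large, and hit all long holes with few vertices when it is small.

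For the packing step, suppose $\tw(G)\geq w$ for a suitable $w=w(\ell,s,k)$. By the Grid Minor Theorem, large enough tree-width forces an $r\times r$ grid minor; I partition the grid's index set into $k$ contiguous subgrids of side $a=\lfloor r/\lceil\sqrt{k}\rceil\rfloor$, and for each subgrid let $V_p$ be the union of the corresponding branch sets. The sets $V_1,\dots,V_k$ are pairwise disjoint, and $G[V_p]$ has the $a\times a$ grid as a minor, so $\tw(G[V_p])\geq a$. Choosing $r$ (hence $w$) large enough that $a\geq t$, each $G[V_p]$ is $K_{s,s}$-free (being an induced subgraph of $G$) and has tree-width $\geq t$, so by Theorem~\ref{main theorem} it contains a chordless cycle $C_p$ of length $>\ell$. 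Since $G[V_p]$ is induced in $G$ and $V(C_p)\subseteq V_p$, each $C_p$ is chordless in $G$ as well, and the $C_p$ are vertex-disjoint because the $V_p$ are. This produces the required $k$ disjoint long holes.

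Hence if $G$ has no $k$ disjoint long holes then $\tw(G)<w$, and we turn to the covering step. Fix a tree-decomposition $(T,\beta)$ of width $<w$ rooted at some node; for a node $t$ write $\beta^{\downarrow}(t)$ for the union of the bags in the subtree rooted at $t$, and recall that for any connected subgraph the nodes whose bags meet it form a subtree of $T$. I now iterate: while a long hole remains, choose a deepest node $t_1$ such that some long hole is contained in $\beta^{\downarrow}(t_1)$, add the $\leq w$ vertices of $\beta(t_1)$ to $X$, and delete them. A standard separator argument shows that every long hole contained in $\beta^{\downarrow}(t_1)$ must meet $\beta(t_1)$, for otherwise its connectedness would confine it to a single child's subtree, contradicting the choice of $t_1$; thus each step removes a witnessing long hole that is disjoint from those found in later steps (the latter avoid both $\beta(t_1)$ and the now-cleared subtree). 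As $G$ has no $k$ disjoint long holes, the iteration halts after at most $k-1$ steps, leaving a set $X$ with $|X|\leq (k-1)w$ such that $G-X$ is $\ell$-chordal. Setting $m=(k-1)w$ completes the argument.

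The heart of the proof is the packing step, and specifically the role of Theorem~\ref{main theorem}: it is what manufactures a genuine long hole out of tree-width alone, while the subgrid partition is exactly what converts one long hole into many disjoint ones without leaving the class of $K_{s,s}$-free graphs. The one point demanding care is that the cycles are produced inside the small pieces $G[V_p]$ yet must remain chordless in all of $G$; this holds precisely because the $V_p$ come from disjoint branch sets and each $G[V_p]$ is induced. The covering step, by contrast, is the routine Erd\H{o}s--P\'{o}sa argument for connected objects in graphs of bounded tree-width, and could be replaced by any such black box.
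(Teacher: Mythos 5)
Your proof is correct and follows essentially the same route as the paper: the contrapositive of Theorem~\ref{main theorem} plus a tree-width dichotomy, where large tree-width yields $k$ disjoint induced subgraphs of tree-width $\geq t$ via the Grid Minor Theorem (the paper cites this packaged as Theorem~\ref{tree-width partition} rather than re-deriving the subgrid partition) and bounded tree-width yields a hitting set of size at most $(k-1)w$. The only cosmetic difference is in the covering step, where the paper invokes the packing--covering duality for the subtrees $T_C$ of the decomposition tree while you run an equivalent greedy deepest-bag iteration.
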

     
     The paper is organised as follows. Section~\ref{sec: prelims} contains some basic definitions. Theorem~\ref{main theorem}, our main result, is proved in Section~\ref{sec: proof}. In Section~\ref{sec: erdos posa} we formally introduce the Erd\H{o}s-P\'{o}sa property, restate Corollary~\ref{erdos posa long holes} in that language and give a proof thereof. Section~\ref{sec: problems} closes with some open problems.

	\end{section}

	\begin{section}{Notation and definitions} \label{sec: prelims}
			All graphs considered here are finite and undirected and contain neither loops nor parallel edges. Our notation and terminology mostly follow that of~\cite{DiestelBook16noEE}.
			
	For two graphs~$G$ and~$H$, we say that~$G$ is \emph{$H$-free} if~$G$ does not contain a subgraph isomorphic to~$H$. 
		Given a tree~$T$ and $s, t \in T$, we write $sTt$ for the unique $s$-$t$-path in~$T$. Given a graph~$G$ and a set~$X$ of vertices of~$G$, a path $P \sub G$ is an \emph{$X$-path} if it contains at least one edge and meets~$X$ precisely in its endvertices. A \emph{separation} of~$G$ is a tuple $(A, B)$ with $V = A \cup B$ such that there are no edges between $A \setminus B$ and $B \setminus A$. The \emph{order} of $(A, B)$ is the number of vertices in $A \cap B$. We call the separation $(A,B)$ \emph{tight} if for all $x, y \in A \cap B$, both~$G[A]$ and~$G[B]$ contain an $x$-$y$-path with no internal vertices in $A \cap B$.
		
		Given an integer~$k$, a set~$X$ of at least~$k$ vertices of~$G$ is a \emph{$k$-block} if it is inclusion-maximal with the property that for every separation $(A,B)$ of \mbox{order~$<\! k$}, either $X \sub A$ or $X \sub B$. By Menger's Theorem, $G$ then contains~$k$ internally disjoint paths between any two non-adjacent vertices in~$X$.
			
			A \emph{\td\ of~$G$} is a pair $(T, \V)$, where~$T$ is a tree and $\V = (V_t)_{t \in T}$ a family of sets of vertices of~$G$ such that for every $v \in V(G)$, the set of $t \in T$ with $v \in V_t$ induces a non-empty subtree of~$T$ and for every edge $vw \in E(G)$ there is a $t \in T$ with $v, w \in V_t$. If $(T, \V)$ is a \td\ of~$G$, then every $st \in E(T)$ induces a separation $(G_{s}^t, G_{t}^s)$ of~$G$, where~$G_{x}^y$ is the union of~$V_u$ for all $u \in T$ for which $y \notin uTx$. Note that $G_s^t \cap G_t^s = V_s \cap V_t$. We call $(T, \V)$ \emph{tight} if every separation induced by an edge of~$T$ is tight.
			
			 Given $t \in T$, the \emph{torso at~$t$} is the graph obtained from $G[V_t]$ 	by adding, for every neighbor~$s$ of~$t$, an edge between any two non-adjacent vertices in~$ V_s \cap V_t$.
			
			Given graphs~$G$ and~$H$, a \emph{subdivision of~$H$ in~$G$} consists of an injective map $\eta: V(H) \to V(G)$ and a map~$P$ which assigns to every edge $xy \in E(H)$ an \mbox{$\eta(x)$-$\eta(y)$-path} $P^{xy} \sub G$ so that the paths $(P^{xy} \colon xy \in E(H) )$ are internally disjoint and no~$P^{xy}$ has an internal vertex in $X := \eta(V(H))$. The vertices in~$X$ are called \emph{branchvertices}. For an integer~$r$, the subdivision is a \emph{$(\leq r)$-subdivision} if every path~$P^{xy}$ has length at most~$r$. When~$H$ is a complete graph, the map~$\eta$ is irrelevant and we only keep track of the set~$X$ of branchvertices and the family $( P^{xy} \colon x, y \in X )$.

	\end{section}

	\begin{section}{Proof of Theorem~\ref{main theorem}} \label{sec: proof}

	As observed in the introduction, the complete bipartite graphs~$K_{s,s}$ show that no bound on the tree-width of \mbox{$F$-free} \mbox{$\ell$-chordal} graphs exists if~$F$ is not bipartite. We now prove that~$F$ being bipartite is sufficient. Since every bipartite graph is a subgraph of some~$K_{s,s}$, it suffices to prove Theorem~\ref{main theorem} for the case $F = K_{s,s}$. 
	
	Our proof is a cascade with three steps. First, we show that sufficiently large tree-width forces the presence of a $k$-block.

		\begin{lemma} \label{tw to block}
			Let~$ \ell, k$ and $t \geq 2(\ell-2)(k-1)^2$ be positive integers. Then every \mbox{$\ell$-chordal} graph of tree-width~$\geq t $ contains a $k$-block.
		\end{lemma}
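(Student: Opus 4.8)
The plan is to prove the contrapositive: if an $\ell$-chordal graph $G$ has no $k$-block, then $\tw(G) < 2(\ell-2)(k-1)^2$. Two consequences of having no $k$-block drive everything. First, no set of at least $k$ vertices can be $k$-inseparable, since such a set would extend to a maximal one and hence to a $k$-block; equivalently, \emph{every} set $W$ with $|W| \ge k$ admits a separation $(A,B)$ of order $<k$ with $W \not\sub A$ and $W \not\sub B$. Second, $G$ is $K_k$-free, because the vertex set of a $K_k$ cannot be split by any separation of order $<k$ (an edge would run between $A \setminus B$ and $B \setminus A$) and so would be $k$-inseparable. I would also record that such splitting separations may be taken \emph{tight}: replacing a non-tight separation of order $<k$ by a tight one of no larger order is routine, and tightness is exactly what lets me turn an abstract separator into genuine paths later.

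From here I would build a tree-decomposition of small width by recursively splitting. Starting from the one-bag decomposition, I repeatedly take a bag that is still too large, split it along a tight separation of order $<k$ supplied by the first fact above, and refine the decomposition accordingly; every separator $V_s \cap V_t$ created this way then has fewer than $k$ vertices. The subtle point---already visible in grids, which have no large block yet unbounded tree-width---is that splitting need not shrink the bags, so the process is \emph{not} guaranteed to terminate with small bags for general graphs. It is precisely here that $\ell$-chordality must enter: I claim that a bag, equivalently its torso, larger than $2(\ell-2)(k-1)^2$ cannot occur, since it would force a chordless cycle of length greater than $\ell$.

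The crux is therefore the following extraction of a long chordless cycle from a large bag $V_t$. Since $G$ is $K_k$-free, $V_t$ contains a non-adjacent pair, and since $V_t$ cannot be a $k$-block of $G$ it is split by a tight separation $(A,B)$ of order $<k$; tightness provides, for vertices lying in the separator $A \cap B$, induced paths on each side that avoid $A \cap B$ internally, which I would assemble into two internally disjoint routes between the non-adjacent pair, one through $A$ and one through $B$. The key geometric observation is that an induced path $P_A$ with interior in $A \setminus B$ and an induced path $Q_B$ with interior in $B \setminus A$ have \emph{no} edges between their interiors; hence gluing them along shared endpoints in $A \cap B$ yields a cycle with no chords across the two sides, so the only possible chords involve the fewer than $k$ separator vertices. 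Taking the paths shortest, hence induced, and tracking the at most $k-1$ vertices in each of the two relevant separators that could carry a chord, one keeps the cycle chordless; and once the bag is large enough the two glued pieces have total length exceeding $\ell$. The length threshold $2(\ell-2)$, from two induced segments each forced to be long, and the count $(k-1)^2$, from pairs of separator vertices across two separators, are what produce the stated bound. Making the two routes long \emph{and} chordless \emph{simultaneously}, while ruling out the bounded number of possible separator-chords, is the main obstacle and the only place where $\ell$-chordality is used.

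Finally, granting the bag bound, the decomposition has width less than $2(\ell-2)(k-1)^2$, so $\tw(G) < 2(\ell-2)(k-1)^2$, which is the contrapositive of the lemma. Alternatively, one can phrase the same content through balanced separators: the bag bound shows that every vertex set has a balanced separator of order at most $(\ell-2)(k-1)^2$, and the standard duality between tree-width and balanced-separator order then yields a bound linear in that order, recovering the factor $2$.
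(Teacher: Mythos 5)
There is a genuine gap at the crux of your argument. Everything rests on the claim that a bag of size greater than $2(\ell-2)(k-1)^2$ "cannot occur, since it would force a chordless cycle of length greater than $\ell$", and your proposed mechanism for this is to glue two internally disjoint induced paths between a non-adjacent pair of the bag, one through each side of a tight separation. But nothing in the hypothesis "the bag is large" forces those two paths to be long: a non-adjacent pair in an arbitrarily large bag can be joined by two induced paths of length $2$ each, giving a $4$-cycle, and the sentence "once the bag is large enough the two glued pieces have total length exceeding $\ell$" is a non sequitur. Worse, the principle you need is false in any form that bounds the \emph{order} of the pieces: a large bounded-degree tree is $\ell$-chordal, has no $3$-block, and contains no cycle at all, so no argument that extracts a long hole from a large piece can be what makes the lemma true. (Your bound $2(\ell-2)\cdot(k-1)^2$ is reverse-engineered rather than produced by the argument.) Separately, your recursive splitting has no termination or width control: splitting a large bag along an order-$<k$ separation can leave a piece essentially as large as before, and taming this is itself a non-trivial theorem.

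The paper's proof does not bound the order of the bags at all --- the torsos of its decomposition may be arbitrarily large --- but bounds their \emph{tree-width}, using two imported results. Theorem~\ref{structure blocks} (from~\cite{weissauer17block}) gives, for any graph with no $k$-block, a tight tree-decomposition in which every torso has fewer than $k$ vertices of degree at least $d:=2(k-1)(k-2)$; this replaces your ad hoc splitting procedure. Then one checks, using tightness exactly in the way you anticipate (expanding each torso-edge into a path through the adjacent part), that every torso $H$ is again $\ell$-chordal; deleting its fewer than $k$ high-degree vertices leaves an $\ell$-chordal graph of maximum degree less than $d$, which by Theorem~\ref{chordal maxdeg} (Bodlaender--Thilikos, improved by Seymour) has tree-width at most $(\ell-2)(d-2)+1$, whence $\tw(H)\leq k+(\ell-2)(d-2)<t$, and the torso decompositions are combined into one for $G$. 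So $\ell$-chordality enters through the bounded-degree tree-width bound, not through a direct extraction of a long hole; your proposal is missing both of these ingredients, and the argument offered in their place does not close.
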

		
	We then prove that the existence of a $k$-block yields a bounded-length subdivision of a complete graph.
		
		
			\begin{lemma}		\label{block to subdiv}
		Let $ \ell, m$ and $k \geq 5m^2 \ell/4$ be positive integers. Then every \mbox{$\ell$-chordal} graph that contains a $k$-block contains a $(\leq 2 \ell -3)$-subdivision of~$K_m$.
	\end{lemma}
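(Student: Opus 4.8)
The plan is to work inside a single highly connected ``bundle'' of paths supplied by the block, and to use $\ell$-chordality to keep that bundle thin. \textbf{Reduction to a bundle.} If the block $X$ happens to be a clique then, since $|X|\ge k\ge m$, it already contains $K_m$ and we are done; so I may fix two non-adjacent vertices $x,y\in X$. As $X$ is a $k$-block, Menger's Theorem (as recorded in Section~\ref{sec: prelims}) yields $k$ internally disjoint $x$--$y$ paths $R_1,\dots,R_k$, and by replacing each $R_i$ by a chordless $x$--$y$ path through the same vertex set I may assume every $R_i$ is induced. Note that I do \emph{not} expect an arbitrary pair of block vertices to be close: a ``thick path'' of complete layers shows that a $k$-block in an $\ell$-chordal graph can have arbitrarily large diameter, so the branchvertices must be chosen close together rather than spread across $X$.

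\textbf{Chordality makes the bundle thin.} The conceptual heart of the argument is that any two of these paths stay uniformly close. Fix $i\ne j$ and consider the cycle $C=R_i\cup R_j$. Since $R_i,R_j$ are induced and internally disjoint, every chord of $C$ joins $R_i$ to $R_j$. Reading the chords in the order their endpoints occur along the paths from $x$ to $y$, I would extract a maximal non-crossing family of chords (``rungs''); then any chord lying inside a single region between two consecutive rungs could be added, so by maximality each such region is a chordless cycle and hence, by $\ell$-chordality, has length at most $\ell$. Consequently the two arcs of $C$ between consecutive rungs each have length at most $\ell-2$. This yields the basic short connector: two vertices $p\in R_i$, $q\in R_j$ sitting between the same pair of rungs can be joined by walking from $p$ to a bounding rung ($\le \ell-2$ steps), crossing it ($1$ edge), and continuing to $q$ ($\le \ell-2$ steps), a path of length at most $2(\ell-2)+1=2\ell-3$ --- precisely the bound in the statement.

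\textbf{Assembling the subdivision.} With the thin-bundle structure in hand I would select $m$ of the $k$ paths and on each one a branchvertex, then route the $\binom m2$ connecting paths pairwise internally disjointly, each of length at most $2\ell-3$, from the short connectors above together with short stretches of the chosen paths. The abundance of paths is what makes disjointness feasible: each connector uses at most $2\ell-4$ internal vertices, so the whole subdivision occupies fewer than $m+\binom m2(2\ell-3)<m^2\ell+m$ vertices, comfortably below $k\ge 5m^2\ell/4$. I would add the connectors one at a time; when inserting the connector for a pair, the at most $k-1$ vertices used so far cannot, by the $k$-block property, separate its two endpoints, so the required rungs and path-segments are still available to be routed around the used set.

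\textbf{Main obstacle.} The genuine difficulty is this last step. The $m-1$ connectors leaving a single branchvertex must be internally disjoint, hence must leave it along $m-1$ different routes, and the short connectors for different pairs compete for the same rungs and path-segments near a common height. I would resolve this by spreading the $m$ chosen paths far apart in the bundle and staggering the branchvertices at different heights, so that each connector lives in its own band of the bundle; the slack between $m^2\ell+m$ and $5m^2\ell/4$ is exactly what pays for this staggering. By contrast, the reduction to a bundle and the proof that the panels between consecutive rungs are short induced cycles are the clean parts, and they are what force the value $2\ell-3$.
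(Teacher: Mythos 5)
Your reduction to a bundle of $k$ internally disjoint induced $x$--$y$ paths and your ``thin bundle'' observation are sound: a maximal non-crossing family of chords of $R_i\cup R_j$ does cut the cycle into chordless regions of length at most $\ell$, and this is indeed where a connector of length $(\ell-2)+1+(\ell-2)=2\ell-3$ comes from. But the step you yourself flag as the main obstacle --- routing $\binom m2$ \emph{internally disjoint} connectors of this length --- is not resolved by your plan, and as described it cannot be. The two requirements pull in opposite directions: a connector of length $\le 2\ell-3$ built from a single rung only joins vertices of $R_i$ and $R_j$ lying in a \emph{common} region, so staggering the branchvertices at different heights destroys the length bound (reaching a shared band costs extra travel along the carrier paths, for which there is no slack in $2\ell-3$); while placing all branchvertices at a common height forces the $m-1$ connectors leaving a single branchvertex $u$ to compete for the at most two neighbours of $u$ on its carrier path, so for $m\ge 4$ at least two of them must leave $u$ along the same path-segment and cannot be internally disjoint --- unless most connectors leave $u$ directly via chords to \emph{other} paths, and nothing in your argument supplies a branchvertex with many such chord-neighbours. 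Your appeal to the $k$-block property only shows the endpoints remain connected after deleting the $<k$ used vertices; it gives no control on the length of the detour, and length is the entire content of the lemma.

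The paper escapes this with a dichotomy you are missing. For each non-adjacent pair it asks how many of the $k$ paths have length $>\ell/2$. If, for some pair, at least $p_0=m+m^2(\ell-2)$ do, then $\ell$-chordality forces a chord between every two of them, and minimising the resulting cycle through $x$ shows all these chords land within distance $\ell-2$ of $x$; one then gets $p_0$ disjoint paths of length $\le\ell-3$ with an edge between any two, and a pigeonhole argument yields $m$ branchvertices each having neighbours on at least $m^2$ \emph{distinct auxiliary} paths. Each connector is then routed through a fresh pair of auxiliary paths (one edge to $P^1$, along $P^1$, one edge to $P^2$, along $P^2$, one edge to the other branchvertex), which makes disjointness automatic and gives length $\le 2\ell-3$. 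If instead every pair has fewer than $p_0$ long paths, then each pair has more than $\binom m2\ell/2$ paths of length $\le\ell/2$, and a greedy choice of pairwise disjoint short paths between $m$ block vertices gives a $(\le\ell/2)$-subdivision directly. You need an idea of this kind --- in particular, branchvertices guaranteed to have many neighbours spread over many disjoint carriers --- to close the gap.
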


	In the last step, we show that such a bounded-length subdivision gives rise to a copy of~$K_{s,s}$.
		
		
			\begin{lemma} \label{subdiv to compbip alt}
	For all integers~$\ell$ and~$s$ there exists a $q >0 $ such that the following holds. Let $m, r$ be positive integers with $m \geq qr$. Then every \mbox{$\ell$-chordal} graph that contains a $(\leq r)$-subdivision of~$K_m$ contains~$K_{s,s}$ as a subgraph.
		\end{lemma}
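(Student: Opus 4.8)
The plan is to fix the set $X$ of branchvertices of the $(\leq r)$-subdivision of $K_m$ and argue by contradiction: assuming $G$ contains no $K_{s,s}$, I will exhibit a chordless cycle of length $>\ell$, contradicting $\ell$-chordality. Throughout I may replace any subdivision path by a shortest path inside it, so that every path I work with is induced. The heart of the argument is a statement about \emph{parallel paths}: if two branchvertices $a,b$ are joined by $N$ internally disjoint induced paths, each of length $>\ell/2$, then $N$ is bounded by a function of $\ell$ and $s$ alone. This is the step responsible for the linear dependence $m\geq qr$.

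To prove the parallel-paths statement, let $R_1,\dots,R_N$ be such paths and consider two of them, $R_i$ and $R_j$. Call the pair \emph{conflicting} if $G$ has an edge joining an interior vertex of $R_i$ to an interior vertex of $R_j$. If some pair is non-conflicting, then $R_i\cup R_j$ is an induced cycle of length $>\ell$, a contradiction; hence every pair conflicts. Now comes the key localisation: orienting each $R_i$ from $a$, choose for the pair $(i,j)$ a conflicting edge $uv$ minimising the sum of the distances of $u$ and $v$ from $a$ along their paths. The cycle formed by the two initial segments together with $uv$ is chordless --- any chord would either violate the inducedness of a single path or produce a conflicting edge with a smaller distance-sum --- so by $\ell$-chordality it has length at most $\ell$. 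Consequently every pair of paths already conflicts within the first $\ell$ layers around $a$. Restricting to the $O(N\ell)$ vertices in these layers, we have at least $\binom{N}{2}$ edges between the $N$ path-interiors, so the K\H{o}v\'ari--S\'os--Tur\'an theorem forces a $K_{s,s}$ once $N$ exceeds a threshold $N_0(\ell,s)$ independent of $r$.

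It remains to produce, somewhere in the subdivision, two branchvertices joined by more than $N_0(\ell,s)$ internally disjoint induced paths of length $>\ell/2$. The natural source is to route, for a fixed pair $a,b$, through each of the remaining $m-2$ branchvertices, giving $m-2$ internally disjoint $a$-$b$ paths of length at most $2r$; a counting step then extracts enough of length $>\ell/2$, provided many routings are long. The difficulty is the opposite case, where almost all routings are short, i.e.\ almost all third branchvertices lie within distance $\ell/2$ of both $a$ and $b$. Iterating this observation --- passing repeatedly to the large subfamily of branchvertices close to a newly chosen centre --- produces a large set of branchvertices that are pairwise joined by \emph{short} subdivision paths, in effect a $(\leq \ell/2)$-subdivision of a large complete graph.

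I expect this short-subdivision regime to be the main obstacle. Parallel short paths alone cannot force $K_{s,s}$, since a theta graph is $K_{s,s}$-free for $s\geq 3$, so here one must genuinely exploit the completeness of the underlying $K_m$ rather than a single pair of endpoints. The plan is to build a chordless cycle through many branchvertices of the short subdivision, adding one branchvertex at a time and using the same localisation principle to argue that only few extensions can create a chord near the current end; the absence of $K_{s,s}$, again via K\H{o}v\'ari--S\'os--Tur\'an, then leaves enough admissible branchvertices to continue for more than $\ell$ steps before closing up. Controlling the interaction between a newly appended short path and the portion of the already-built cycle lying close to the current endpoint is the delicate point on which the whole argument turns.
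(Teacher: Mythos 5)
Your first step --- the ``parallel paths'' bound, with the conflict edge chosen to minimise the distance-sum to $a$ and the resulting localisation of all conflicts into the first $\ell$ layers --- is sound (it is essentially the Claim in the paper's proof of Lemma~\ref{block to subdiv}). The gap is everything after it. First, the reduction: from the assumption that the induced $a$-$b$ path inside $P^{ac}\cup P^{cb}$ is short you infer that $c$ lies within distance $\ell/2$ of both $a$ and $b$, but this does not follow --- a single chord between $P^{ac}$ and $P^{cb}$ close to $a$ and to $b$ respectively short-circuits the walk while $c$ remains arbitrarily far from both endpoints. Second, and more seriously, the short-subdivision regime, which you yourself identify as ``the main obstacle'' and ``the delicate point on which the whole argument turns,'' is left as a plan rather than a proof. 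This is not a peripheral case: a $(\leq \ell/2)$-subdivision of $K_{m'}$ is precisely an instance of the lemma being proved, so deferring it defers the entire content. The proposed greedy construction of a long chordless cycle through branchvertices is not supported by your localisation principle: that principle controls chords between two paths sharing a common endpoint, whereas when you append a new short path to a partially built cycle you must exclude chords from the new path to \emph{every} earlier portion of the cycle, including portions far from the current end, and nothing in your argument rules these out. Experience with such greedy constructions suggests this is exactly where they collapse.

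For contrast, the paper never attempts to exhibit a single long chordless cycle. It fixes $d$ from the K\"uhn--Osthus theorem (with $H=C_{\ell+1}$), extracts an independent set $Y$ of branchvertices of size $\geq m/d$ using $d$-degeneracy, and then observes that for \emph{every} $\ell$-subset $Z\subseteq Y$ the cycle $C_Z$ obtained by concatenating the subdivision paths along a cyclic order of $Z$ must have a chord, necessarily an edge between two distinct paths or from a branchvertex into a path. Each such edge can serve as a chord for at most $\binom{|Y|-3}{\ell-3}$ of the $\binom{|Y|}{\ell}$ subsets, so the subgraph spanned by the paths has at least $\Omega(|Y|^3/\ell^\ell)$ such edges on at most $r\binom{|Y|}{2}$ vertices, hence average degree $\geq d$, and K\"uhn--Osthus yields $K_{s,s}$. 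This counting over all $\ell$-subsets is the idea your proposal is missing: it converts ``every long cycle has a chord somewhere'' into a density statement without ever having to control \emph{where} the chords land.
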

		
		It is immediate that Theorem~\ref{main theorem} follows once we have established these three lemmas.
	
	\begin{subsection}{Proof of Lemma~\ref{tw to block}}
	
	A trivial obstacle to our search for a copy of~$K_{s,s}$ is the absence of vertices of high degree. Bodlaender and Thilikos~\cite{bodlaender97chordal} showed, however, that \mbox{$\ell$-chordal} graphs of bounded degree have bounded tree-width. Their exponential bound was later improved by Kosowski, Li, Nisse and Suchan~\cite{kosowski12chordal} and by Seymour~\cite{seymour16treechi}.
	
		\begin{theorem}[\cite{seymour16treechi}] \label{chordal maxdeg}
			Let~$\ell$ and~$\Delta$ be positive integers and~$G$ a graph. If~$G$ is \mbox{$\ell$-chordal} and has no vertices of degree greater than~$\Delta$, then the tree-width of~$G$ is at most $(\ell - 2)(\Delta - 1) + 1$.
		\end{theorem}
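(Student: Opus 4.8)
The plan is to construct, for any connected $\ell$-chordal graph $G$ of maximum degree at most $\Delta$, a tree-decomposition all of whose bags have at most $(\ell-2)(\Delta-1)+2$ vertices; since tree-width is the maximum over connected components, this yields $\tw(G) \le (\ell-2)(\Delta-1)+1$. I would organise such a decomposition around a breadth-first search: fix a root $r$, let $L_0, L_1, \dots$ be the distance layers, and for each $i$ consider the connected components of the graph induced on $L_{\ge i} := L_i \cup L_{i+1} \cup \cdots$. These components nest as $i$ decreases, so they index the nodes of a decomposition tree, and the natural bag attached to a component $C$ at level $i$ is its upper boundary $N(C) \subseteq L_{i-1}$ together with the topmost slice $C \cap L_i$. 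Verifying the tree-decomposition axioms for this layered decomposition is routine (each vertex occurs in a star of nodes centred at its own level, and every edge joins vertices of the same or consecutive layers), so all the content lies in bounding the bag sizes.

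For the bag bound I would exploit the two hypotheses separately. $\ell$-chordality enters as follows: if $x$ and $y$ both lie in a boundary $N(C)$, then there is an $x$--$y$ path descending into $C$ (since $C$ is connected and both have neighbours there) and, because $x,y \in L_{i-1}$ both reach $r$ through the upper layers, also an $x$--$y$ path staying in $L_{\le i-1}$; choosing both paths to be geodesics makes their union an induced cycle, up to short-circuiting chords between the two bottom layers. $\ell$-chordality forces the total length to be at most $\ell$, which confines the relevant vertices to within $\ell-2$ steps of a single shortest path, the \emph{spine}. The maximum-degree hypothesis then enters through the geometry of a geodesic $r_0 r_1 \cdots r_p$: every vertex is adjacent to at most three consecutive $r_j$ (a longer reach would short-circuit the geodesic), so along a spine of at most $\ell-2$ edges the number of attached vertices is controlled by $\Delta-1$ per position, giving the count $(\ell-2)(\Delta-1)+1$ together with the spine endpoint.

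The main obstacle, and the reason the constant is subtle, is making the dependence on $\Delta$ \emph{linear} rather than exponential. A naive reading of the chordality argument only says that a bag sits inside a ball of radius $\ell-2$, and such a ball may contain $(\Delta-1)^{\ell-2}$ vertices. To beat this one must show that each bag is not a fat ball but a caterpillar: a single geodesic spine of length at most $\ell-2$ with at most $\Delta-1$ vertices hanging off each internal vertex. Establishing that one spine suffices — that two vertices of a bag cannot be pulled apart along genuinely different geodesics without creating a chordless cycle longer than $\ell$ — is where $\ell$-chordality must be used most carefully, and this is the step I expect to be hardest to make precise while simultaneously respecting the global tree-decomposition axioms.

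Finally I would reconcile the two ingredients by refining the layered decomposition, for instance splitting each boundary $N(C)$ along the spine, or reducing first to the $2$-connected case so that the separators in play are minimal and hence essentially one-dimensional, so that every bag is exactly a spine-plus-attachments set and the two counts combine into the single bound $(\ell-2)(\Delta-1)+2$ on its size. Recovering $\tw(G) \le \Delta$ in the chordal case $\ell=3$, where minimal separators are cliques, gives a useful check on the bookkeeping at each step.
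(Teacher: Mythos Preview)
This theorem is not proved in the paper; it is quoted from~\cite{seymour16treechi} and used as a black box in the proof of Lemma~\ref{tw to block}. There is therefore no in-paper argument to compare your proposal against.

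On its own terms, your proposal is a reasonable plan and has the same overall shape as Seymour's argument: a BFS layering, with the hole-length hypothesis controlling the geometry of each bag, and the degree bound then converting that geometric control into a vertex count. You have also correctly located the only real difficulty, namely passing from the trivial ball estimate (exponential in~$\Delta$) to the linear ``spine-plus-pendants'' bound. But you do not carry that step out; you describe the intended shape of the answer and explicitly flag the spine argument as the part you ``expect to be hardest to make precise''. Since that step is the entire content of the theorem, what you have written is an outline rather than a proof. Two places would need real work: the assertion that two geodesics between boundary vertices combine to an induced cycle ``up to short-circuiting chords between the two bottom layers'' is not correct as stated (the upper path through~$r$ is typically not a geodesic, and even genuine geodesics need not concatenate to an induced cycle), and the caterpillar arithmetic as sketched does not literally produce the constant $(\ell-2)(\Delta-1)+2$ without a sharper accounting of which spine vertices actually contribute to the bag.
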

		
		By demanding large tree-width, we can therefore guarantee a large number of vertices of high degree. We now show that these are not all just scattered about the graph. It was shown by the author in~\cite{weissauer17block} that either there is a $k$-block or there is a tree-decomposition which separates the set of vertices of high degree into small pieces. This also follows, without explicit bounds, from a far more general result of Dvo\v{r}\'{a}k~\cite{dvorak12stronger}.
		
		\begin{theorem}[\cite{weissauer17block}] \label{structure blocks}
			Let $k \geq 3$ be a positive integer and~$G$ a graph. If~$G$ has no $k$-block, then there is a tight \td\ $(T, \V)$ of~$G$ such that every torso has fewer than~$k$ vertices of degree at least~$2(k-1)(k-2)$.
		\end{theorem}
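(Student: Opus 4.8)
The plan is to argue by contraposition: I assume that~$G$ is an \mbox{$\ell$-chordal} graph with \emph{no} $k$-block and aim to show that $\tw(G) < t = 2(\ell-2)(k-1)^2$. Since Theorem~\ref{structure blocks} requires $k \geq 3$, I first dispose of the cases $k \in \{1,2\}$ directly --- there $(k-1)^2 \leq 1$, so the target bound is at most~$2(\ell-2)$ and a $k$-block is easily exhibited by hand (and $\ell \leq 2$ is degenerate, as then $t \leq 0$). So assume $k \geq 3$ and apply Theorem~\ref{structure blocks} to obtain a tight \td\ $(T, \V)$ of~$G$ in which every torso $H_t$ has fewer than~$k$ vertices of degree at least $2(k-1)(k-2)$. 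My strategy is to bound the tree-width of each torso separately and then invoke the standard fact that $\tw(G) \leq \max_{t \in T} \tw(H_t)$: a \td\ of~$G$ of width $\max_t \tw(H_t)$ is obtained by gluing together \td s of the individual torsos along the separators $V_s \cap V_t$, each of which, being a clique in~$H_t$, is contained in a single bag of any \td\ of~$H_t$.

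The crux of the argument --- and the step I expect to be the main obstacle --- is the claim that \emph{every torso~$H_t$ is itself \mbox{$\ell$-chordal}}, which is exactly where tightness enters. I would prove this by contraposition as well: from a chordless cycle~$C$ of length $p > \ell$ in~$H_t$ I construct a chordless cycle of length $\geq p$ in~$G$. Each edge of~$C$ is either an edge of $G[V_t]$ or was added inside some clique $V_s \cap V_t$. Because $C$ is chordless of length $\geq 5$ and each $V_s \cap V_t$ is a clique in~$H_t$, that clique can contain at most two vertices of~$C$, which must then be adjacent on~$C$; hence at most one edge of~$C$ is contributed by each such clique. For each added edge $xy$, coming from a clique $V_s \cap V_t$, tightness of the separation $(G_s^t, G_t^s)$ supplies an \mbox{$x$-$y$-path} in $G[G_s^t]$ with no internal vertex in $V_s \cap V_t$; I take it to be an \emph{induced} (e.g.\ shortest) such path and replace~$xy$ by it.

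To see that the resulting cycle~$C'$ is simple and chordless, I would verify two disjointness properties and then rule out chords. First, the interior of each replacement path avoids~$V_t$: a vertex of $G_s^t \cap V_t$ lies in $V_s \cap V_t$ by the subtree property, so an internal vertex, which by construction avoids $V_s \cap V_t$, avoids $V_t$. Second, interiors belonging to distinct cliques are disjoint, since $G_s^t \cap G_{s'}^t \sub V_t$ for distinct neighbours $s, s'$ of~$t$. A putative chord of~$C'$ then falls into one of three cases, each excluded: a chord internal to a single path is impossible by inducedness; a chord joining the interiors of two different paths is impossible because their endpoints lie in different components of $T - t$ and so share no bag; and a chord with an endpoint in~$V_t$ projects to a chord of~$C$ in~$H_t$ (using that the only vertices of~$C$ in a given $V_s \cap V_t$ are the two endpoints of the one edge it contributes), contradicting chordlessness of~$C$. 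Thus $C'$ is a chordless cycle of~$G$ of length $\geq p > \ell$, so if~$G$ is \mbox{$\ell$-chordal} then so is every~$H_t$.

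With torso \mbox{$\ell$-chordality} in hand the proof concludes by a direct computation. In each torso~$H_t$, let~$S_t$ be its set of fewer than~$k$ vertices of degree $\geq 2(k-1)(k-2)$. The induced subgraph $H_t - S_t$ is \mbox{$\ell$-chordal} with maximum degree at most $2(k-1)(k-2)-1$, so Theorem~\ref{chordal maxdeg} gives $\tw(H_t - S_t) \leq (\ell-2)(2(k-1)(k-2)-2)+1$; reinstating the at most $k-1$ vertices of~$S_t$ yields $\tw(H_t) \leq (\ell-2)(2(k-1)(k-2)-2)+1+(k-1)$. Since $2(\ell-2)(k-1)(k-2)$ is the dominant term and is strictly below $2(\ell-2)(k-1)^2$, a short calculation (the remaining comparison reduces to $k+1 \leq 2(\ell-2)k$, which holds for $\ell \geq 3$) shows this bound is strictly less than $2(\ell-2)(k-1)^2 = t$. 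As this bounds every torso, $\tw(G) \leq \max_t \tw(H_t) < t$, which is the desired contrapositive.
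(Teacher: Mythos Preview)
Your proposal does not address the stated theorem. Theorem~\ref{structure blocks} is a structural result about \emph{arbitrary} graphs with no $k$-block: it asserts the existence of a tight \td\ whose torsos have few high-degree vertices. There is no parameter~$\ell$, no chordality hypothesis, and no tree-width bound~$t$ in its statement. What you have written is a proof of Lemma~\ref{tw to block} --- that \mbox{$\ell$-chordal} graphs of tree-width $\geq t$ contain a $k$-block --- and in its second paragraph your argument \emph{invokes} Theorem~\ref{structure blocks} as a black box (``apply Theorem~\ref{structure blocks} to obtain a tight \td\ $(T,\V)$\ldots''). You are therefore assuming the very result you were asked to establish.

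In the present paper, Theorem~\ref{structure blocks} is not proved at all: it is quoted from~\cite{weissauer17block}, with the remark that tightness of the decomposition appears there as Lemma~6. So there is no proof here to compare against. If your intended target was actually Lemma~\ref{tw to block}, then your proposal is correct and follows the paper's own argument essentially line for line: dispose of small~$k$, apply Theorem~\ref{structure blocks}, use tightness to show each torso is \mbox{$\ell$-chordal}, delete the fewer than~$k$ high-degree vertices, bound the remainder via Theorem~\ref{chordal maxdeg}, and glue the torso decompositions together. But as a proof of Theorem~\ref{structure blocks} itself, the proposal is circular.
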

		
	In fact, tightness of the \td\ is not explicit in~\cite[Theorem~1]{weissauer17block}, but is established in the proof as \emph{Lemma~6}.

	Now let $\ell, k$ and $t \geq 2( \ell - 2)(k-1)^2$ be positive integers. Let~$G$ be an \mbox{$\ell$-chordal} graph with no $k$-block. For $k = 2$, this means that~$G$ is acyclic and therefore has tree-width~1. Suppose from now on that $k \geq 3$. We show that the tree-width of~$G$ is less than~$t$.
			
			By Theorem~\ref{structure blocks}, there is a tight \td\ $(T, \V)$ of~$G$ such that every torso has fewer than~$k$ vertices of degree at least $d := 2(k-1)(k-2)$. Let $t \in T$ arbitrary, let~$N$ be the set of neighbors of~$t$ in~$T$ and let~$H$ be the torso at~$t$. We claim that~$H$ is \mbox{$\ell$-chordal}.
			
			Let $C \sub H$ be a chordless cycle. For every edge $xy \in E(C) \setminus E(G)$, there is some $s \in N$ with $x, y \in V_s \cap V_t$. Since $(T, \V)$ is tight, there exists an \mbox{$x$-$y$-path}~$P^{xy}$ in $G_{s}^t$ which meets~$V_t$ only in its endpoints. Observe that for every $s \in N$, $C$ contains at most two vertices of~$V_s$ and these are adjacent in~$C$. Hence we can replace every edge $xy \in E(C) \setminus E(G)$ by~$P^{xy}$ and obtain a chordless cycle~$C'$ of~$G$ with $|C'| \geq |C|$. Since~$G$ is \mbox{$\ell$-chordal}, it follows that $|C| \leq \ell$. This proves our claim.
			
			Now, let $A \sub V(H)$ be the set of all vertices of degree $\geq d$ in~$H$. Then $H - A$ is \mbox{$\ell$-chordal} and has no vertices of degree~$> d-1 $. By Theorem~\ref{chordal maxdeg}, the tree-width of $H - A$ is at most~$(\ell-2)(d-2) + 1 $. Therefore
			\[
			\tw(H) \leq |A| + \tw( H - A) \leq k + (\ell - 2)(d-2) < t.
			\]
			We have shown that every torso has tree-width~$<\! t$. We can then take a \td\ of width~$<\! t$ of each torso and combine all these to a tree-decomposition of width~$<\! t$ of~$G$. 
			\qed
	
	\end{subsection}	
	
	\begin{subsection}{Proof of Lemma~\ref{block to subdiv}}

		In general, the presence of a $k$-block does not guarantee the existence of any subdivision of~$K_m$ for $m \geq 5$. For example, take a rectangular $k^2 \times k$-grid, add $2(k+1)$ new vertices to the outer face and make each of these adjacent to~$k$ consecutive vertices on the perimeter of the grid (see Figure~\ref{planar block}). These new vertices are then a $k$-block in the resulting planar graph.
		
		\begin{figure}[htb] \label{planar block}
			\begin{center}
				\includegraphics[width=5cm]{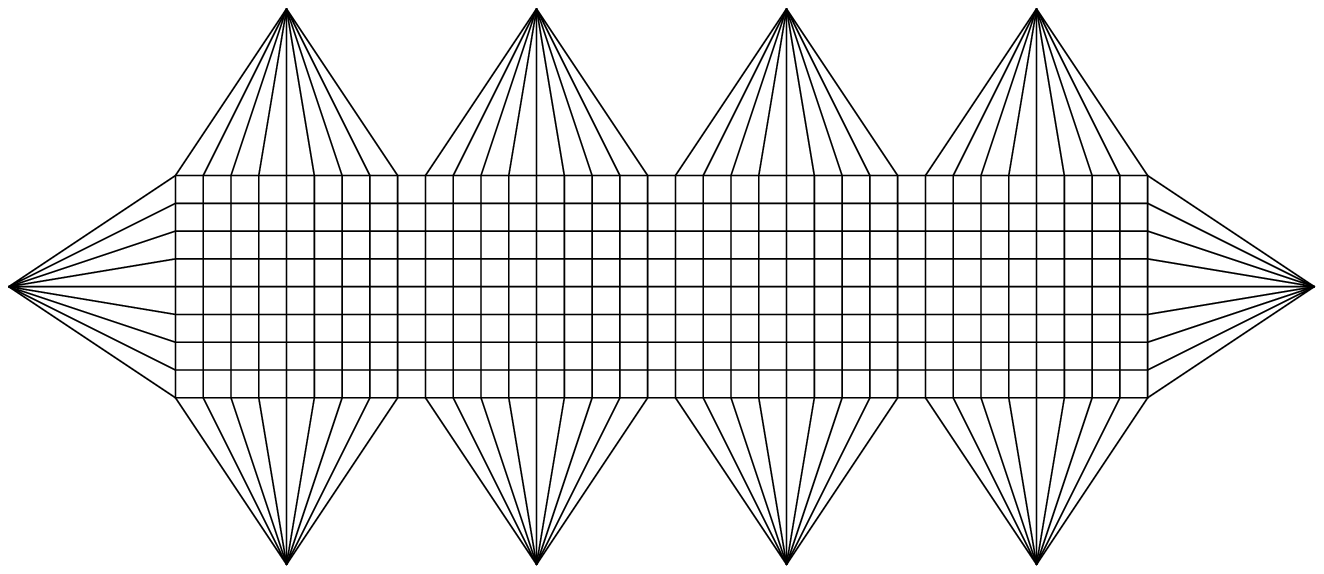}
			\end{center}
			\caption{A planar graph with a 9-block}
		\end{figure}
		
		Our aim in this section is to show that for \mbox{$\ell$-chordal} graphs, sufficiently large blocks do indeed yield bounded-length subdivisions of complete graphs.

	Let $\ell, m$ and $k \geq 5 m^2 \ell/4$ be positive integers. Let~$G$ be an \mbox{$\ell$-chordal} graph and $X \sub V(G)$ a $k$-block of~$G$. Let $L := 2 \ell - 3$. Assume for a contradiction that~$G$ contained no $(\leq L)$-subdivision of~$K_m$. Let $x, y \in X$ non-adjacent. Then~$G$ contains a set~$\PP^{xy}$ of~$k$ internally disjoint $x$-$y$-paths. Taking subpaths, if necessary, we may assume that each path in~$\PP^{xy}$ is induced. Let $p_0 := m + m^2(\ell - 2)$. \\
			
			\emph{Claim:} Fewer than~$p_0$ paths in~$\PP^{xy}$ have length~$> \ell/2$.
			\begin{proof}[Proof of Claim]
					Let~$\PP_0$ be the set of all paths in~$\PP^{xy}$ of length~$> \ell/2$ and $p := |\PP_0|$. Assume for a contradiction that $p \geq p_0$. Let $P, Q \in \PP_0$. Then $P \cup Q$ is a cycle of length~$> \ell$. Since~$G$ is \mbox{$\ell$-chordal}, $P \cup Q$ has a chord. This chord must join an internal vertex of~$P$ to an internal vertex of~$Q$. Choose such vertices $v^Q_P \in P$ and $v^P_Q \in Q$ so that the cycle $D := xPv^Q_Pv^P_QQx$ has minimum length. Note that~$D$ is an induced cycle and therefore has length at most~$\ell$. In particular, the segment of~$P$ joining~$x$ to~$v^Q_P$ has length at most~$\ell - 2$ and similarly for~$Q$ and~$v^P_Q$.
			
			For $P \in \PP_0$, let~$P'$ be a minimal subpath of~$P$ containing every vertex~$v_P^Q$, $Q \in \PP_0 \setminus \{ P \}$. Then $\PP := \{ P' \colon P \in \PP_0 \}$ is a family of~$p$ disjoint paths, each of length at most~$\ell - 3$, and~$G$ contains an edge between any two of them. Fix an arbitrary $\Q \sub \PP$ with $|\Q| = m$. Since $p \geq p_0$, every $Q \in \Q$ contains a vertex~$u_Q$ which has neighbors on at least~$m^2$ different paths in $\PP \setminus \Q$.
			
			Let $U := \{ u_Q \colon Q \in \Q \}$. We iteratively construct a $(\leq L)$-subdivision of~$K_m$ with branchvertices in~$U$. Let $t := \binom{m}{2}$ and enumerate the pairs of vertices of~$U$ arbitrarily as $e_1, \ldots, e_t$. In the $j$-th step, we assume that we have constructed a family $\R^j = (R_i)_{i < j}$ of internally disjoint $U$-paths of length at most~$L$, so that~$R_i$ joins the vertices of~$e_i$ and meets at most two paths in $\PP \setminus \Q$. We now find a suitable path~$R_j$.
			
			Let $Q^1, Q^2 \in \Q$ with $e_j = u_{Q^1}u_{Q^2}$. At most $2(j-1) < m^2$ paths in~$\PP \setminus \Q$ meet any of the paths in~$\R^j$. Since~$u_{Q^1}$ is adjacent to vertices on at least~$m^2$ different paths in~$\PP \setminus \Q$, there is a $P^1 \in \PP \setminus \Q$ which is disjoint from every~$R_i$, $i < j$, and contains a neighbor of~$u_{Q^1}$. We similarly  find a path $P^2 \in \PP \setminus \Q$ for~$u_{Q^{2}}$.  Since either $P^1 = P^2$ or~$G$ has an edge between~$P^1$ and~$P^2$, $P^1 \cup P^2 \cup \{ u_{Q^1}, u_{Q^2} \}$ induces a connected subgraph of~$G$ and therefore contains a $u_{Q^1}$-$u_{Q^2}$-path~$R_j$ of length at most~$L$, which meets only two paths in $\PP \setminus \Q$.
			
			Proceeding like this, we find the desired subdivision of~$K_m$ after~$t$ steps. This contradiction finishes the proof of the claim.
			\end{proof}
			
		Let $Y \sub X$ with $|Y| = m$. For any two non-adjacent $x, y \in Y$, let $\Q^{xy} \sub \PP^{xy}$ be the set of all $P \in \PP^{xy}$ of length at most~$\ell / 2$ which have no internal vertices in~$Y$. By the claim above, we have
	\[
	| \Q^{xy}| > k - p_0 - (m - 2) \geq \binom{m}{2} \frac{ \ell}{2} .
	\]	
	Pick one path $P \in \Q^{xy}$ for each pair of non-adjacent vertices $x, y \in Y$ in turn, disjoint from all previously chosen paths. Since $|Q^{xy}| \geq \binom{m}{2} \frac{ \ell}{2}$ and each path only has at most $\ell/2 - 1$ internal vertices which future paths need to avoid, we can always find a suitable such path~$P$. Together with all edges between adjacent vertices of~$Y$, this yields a $(\leq \ell/2)$-subdivision of~$K_m$ in~$G$ with branchvertices in~$Y$.
		\qed			\\
	
	We would like to point out that a modification of the above argument can be used to produce a $(\leq \ell / 2)$-subdivision of~$K_m$ if~$k$ is significantly larger. 
	
	Indeed, suppose we find a family~$\PP$ of~$p$ disjoint paths, each of length at most~$\ell - 3$, such that~$G$ contains an edge between any two of them. Then the subgraph~$H$ induced by $\bigcup_{P \in \PP} V(P)$ has at most $(\ell - 2)p$ vertices and at least~$\binom{p}{2}$ edges. One can then use a classic result of K\"{o}vari, S\'{o}s and Tur\'{a}n~\cite{kovarisosturan} to show that~$H$ contains a copy of~$K_{m,m^2}$ if~$p$ is sufficiently large. Since~$K_{m,m^2}$ contains a $(\leq 2)$-subdivision of~$K_m$, this establishes an upper bound on the number of paths of length~$>\ell/2$ in any~$\PP^{xy}$. The rest of the proof remains the same.

	\end{subsection}
	
	\begin{subsection}{Proof of Lemma~\ref{subdiv to compbip alt}}
	
	The combination of Lemma~\ref{tw to block} and Lemma~\ref{block to subdiv} already establishes that tree-width is a local parameter for \mbox{$\ell$-chordal} graphs. The purpose of Lemma~\ref{subdiv to compbip alt} is merely to narrow the set of bounded-order obstructions down as far as possible.	We will use the following theorem of K\"{u}hn and Osthus~\cite{kuhn04induced}.
	
		\begin{theorem}[\cite{kuhn04induced}] \label{kuhn osthus}
			For every integer~$s$ and every graph~$H$ there exists a~$d$ so that every graph with average degree at least~$d$ either contains~$K_{s,s}$ as a subgraph or contains an induced subdivision of~$H$.  
		\end{theorem}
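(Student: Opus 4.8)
The plan is to reduce to the case where $H = K_m$ is a complete graph and then to force an \emph{induced} subdivision of $K_m$ by a two-stage argument: first extract from the host graph a subgraph of both large girth and large minimum degree, and then build a subdivision inside it whose paths are long enough to respect the girth but short enough that the girth itself forbids every chord.

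For the reduction, set $m := |V(H)|$, so that $H$ is a subgraph of $K_m$. Suppose $G$ contains an induced subdivision of $K_m$, with branchvertices $(b_v)_{v \in V(K_m)}$ and induced, pairwise non-adjacent paths $(P^e)_{e \in E(K_m)}$. Discarding the paths $P^e$ with $e \notin E(H)$ leaves a subgraph which is precisely a subdivision of $H$; since an induced subgraph of an induced subgraph is again induced, this is an induced subdivision of $H$. Hence it suffices to prove the following: for all $s$ and $m$ there is a $d$ such that every \mbox{$K_{s,s}$-free} graph of average degree at least $d$ contains an induced subdivision of $K_m$.

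First I would pass to a subgraph of large girth and large minimum degree. Here \mbox{$K_{s,s}$-freeness} is indispensable, since $K_n$ has unbounded average degree yet every subgraph on at least three vertices has girth $3$. By the K\"{o}v\'{a}ri--S\'{o}s--Tur\'{a}n theorem a \mbox{$K_{s,s}$-free} graph on $n$ vertices has only $O(n^{2 - 1/s})$ edges, so it is locally sparse and, for each fixed length, contains comparatively few short cycles. A deletion argument --- removing each edge independently with a suitable probability and then one edge from each surviving cycle of length at most $g$ --- should therefore destroy all such short cycles while retaining average degree tending to infinity with $d$. Passing to a subgraph of minimum degree at least half the average degree, which cannot decrease the girth, would then yield, for any prescribed $g$ and $D$, a subgraph of girth greater than $g$ and minimum degree at least $D$, provided $d = d(s,g,D)$ is large enough. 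I expect this girth-extraction step to be the main obstacle: it is where the structural force of \mbox{$K_{s,s}$-freeness} is spent, and balancing the cycle counts across all lengths up to $g$ in the probabilistic argument requires care.

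The second stage is then almost automatic. In a graph of minimum degree at least $D$ and girth greater than $g$, every ball of radius less than $g/2$ is a tree with branching at least $D-1$, so the graph expands, and for $D$ and $g$ large in terms of $m$ one can greedily route $\binom{m}{2}$ internally disjoint paths realising a subdivision of $K_m$, each of a common length $\ell$ with $g/3 < \ell < g/2$. The lower bound is what makes the subdivision fit: the three paths of any triangle of the subdivision form a cycle of length $3\ell > g$, as the girth demands. The upper bound is what we exploit: any two vertices of the subdivision lie within distance at most $2\ell$ of one another inside it, so a chord --- an edge of $G$ joining two vertices of the subdivision that is not part of it --- would close a cycle of length at most $2\ell + 1 \le g$, contradicting that the girth exceeds $g$. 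Hence the subdivision has no chords and is induced, completing the proof. The only delicate point in this stage is to route internally disjoint paths whose lengths all land in the admissible window, which the exponential ball growth makes possible once $D$ and $g$ are chosen large enough.
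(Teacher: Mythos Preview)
This theorem is not proved in the paper at all: it is quoted from K\"{u}hn and Osthus~\cite{kuhn04induced} and invoked as a black box in the proof of Lemma~\ref{subdiv to compbip alt}. The paper even notes that only the special case $H=C_{\ell+1}$ is needed and points to K\"{u}hn's thesis for a simpler treatment of that case. So there is no in-paper proof to compare your proposal against.

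For what it is worth, your plan is in the spirit of the actual K\"{u}hn--Osthus argument: use $K_{s,s}$-freeness to pass to a subgraph of large girth and large minimum degree, then build the subdivision there and let the girth forbid chords. Two comments. First, your reduction to $K_m$ tacitly assumes every subdivision path has an internal vertex --- if some $P^e$ is a single edge, ``discarding'' it leaves that edge present in the induced subgraph on the remaining vertices --- but this is harmless since your later construction produces paths of length~$>g/3$. Second, and more to the point, the girth-extraction step carries essentially all the difficulty, and your random-deletion sketch would need non-trivial upper bounds on the number of short cycles in a $K_{s,s}$-free graph of given average degree; K\"{u}hn and Osthus establish this step as a separate lemma with some care, and it is not clear that the balancing you allude to goes through as stated. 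Your second stage is correct: once the girth exceeds~$g$ and all paths have length in $(g/3,g/2)$, any chord would close a cycle of length at most $2\ell+1\le g$, a contradiction. Routing internally disjoint paths of controlled length in a locally tree-like graph is genuine work but by now standard.
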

		
		In fact, we only need the special case $H = C_{\ell + 1}$. This special case has a simpler proof which can be found in K\"{u}hn's PhD-thesis~\cite{kuhnthesis}. Fix an integer~$d$ so that every \mbox{$\ell$-chordal} graph of average degree at least~$d$ contains~$K_{s,s}$ as a subgraph. We prove the assertion of Lemma~\ref{subdiv to compbip alt} with $q := d^2 \frac{\ell^\ell}{4(\ell - 3)!}$.
		
	Let~$m, r$ be positive integers with $m \geq q r$ and let~$G$ be an \mbox{$\ell$-chordal} graph containing a $(\leq r$)-subdivision of~$K_m$. Let~$X$ be the set of branchvertices and $(P^{xy} \colon x, y \in X )$ the family of paths of the subdivision. Taking subpaths, if necessary, we may assume that every path is induced.
		
		Assume for a contradiction that~$G$ contained no copy of~$K_{s,s}$. By Theorem~\ref{kuhn osthus}, every subgraph of~$G$ contains a vertex of degree~$< \! d$. In particular, there is an independent set $Y \sub X$ with $|Y| \geq m/d$. Let~$H$ be the subgraph of~$G$ induced by $\bigcup_{x, y \in Y} V(P^{xy})$. Note that $|H| \leq r \binom{|Y|}{2}$. 
		
		Call an edge of~$H$ \emph{red} if it joins a vertex $x \in Y$ to an internal vertex of a path $P^{yz}$ with $x \notin \{ y, z \}$. Call an edge of~$H$ \emph{blue} if it joins an internal vertex of a path~$P^{wx}$ to an internal vertex of a path~$P^{yz}$ with $\{ w, x \} \neq \{ y, z \}$. We will show that~$H$ must contain many edges which are either red or blue, so that the average degree of~$H$ is at least~$d$.

		Fix an arbitrary cycle~$R$ with $V(R) = Y$. For any $Z \sub Y$ with $|Z| = \ell$, obtain the cycle~$R_Z$ with $V(R_Z) = Z$ by contracting every $Z$-path of~$R$ to a single edge. We then get a cycle $C_Z \sub H$ by replacing every edge $xy \in R_Z$ with the path~$P^{xy}$. Since each path~$P^{xy}$ has length at least~2 and~$H$ is \mbox{$\ell$-chordal}, the cycle~$C_Z$ must have a chord. Since~$Y$ is independent and every path~$P^{xy}$ is induced, the chord must be a red or blue edge of~$H$.
		
		Consider a red edge $xv \in E(H)$ with $x \in Y$, $v \in P^{yz}$ and $x \notin \{ y, z \}$. If this edge is a chord for a cycle~$C_Z$, then $\{ x, y, z \} \sub Z$. Hence it can only occur as a chord for at most 
		\[
		\binom{|Y|-3}{\ell -3} \leq \frac{ |Y|^{\ell - 3}}{(\ell - 3)!} 
		\] 
		choices of~$Z$. Similarly, every blue edge $uv \in E(H)$ with $u \in P^{wx}$, $v \in P^{yz}$ and $\{ w, x \} \neq \{ y, z \}$ can only be a chord of~$C_Z$ if $\{ w,x,y,z \} \sub Z$. This also happens for at most 
		\[
		\binom{|Y|-3}{\ell -3} \leq \frac{ |Y|^{\ell - 3}}{(\ell - 3)!} 
		\]
		 choices of~$Z$. Let~$f$ be the number of edges of~$H$ which are either red or blue. Since every $Z \sub Y$ with $|Z| = \ell$ gives rise to a chord, it follows that
		 \[
	\frac{|Y|^{\ell}}{\ell^{\ell}}	\leq  \binom{|Y|}{\ell} \leq f \frac{|Y|^{\ell - 3}}{(\ell - 3)!}  .
		 \]
		This shows that the average degree of~$H$ is
		\[
		d(H) \geq \frac{2f}{|H|} \geq \frac{4 (\ell - 3)!}{r \ell^{\ell}} |Y| \geq d .
		\]
		By Theorem~\ref{kuhn osthus}, $H$ contains a copy of~$K_{s,s}$.
			\qed

	\end{subsection}
	
	\end{section}

	\begin{section}{Erd\H{o}s-P\'{o}sa for long chordless cycles} \label{sec: erdos posa}
	
	A classic theorem of Erd\H{o}s and P\'{o}sa~\cite{erdosposa} asserts that for every integer~$k$ there is an integer~$r$ such that every graph either contains~$k$ disjoint cycles or a set of at most~$r$ vertices meeting every cycle. This result has been the starting point for an extensive line of research, see the survey by Raymond and Thilikos~\cite{raymond17survey}.

	Let~$\F, \G$ be classes of graphs and~$\leq$ a containment relation between graphs. We say that~$\F$ \emph{has the Erd\H{o}s-P\'{o}sa property for~$\G$ with respect to~$\leq$} if there exists a function~$f$ such that for every $G \in \G$ and every integer~$k$, either there are disjoint $Z_1, \ldots, Z_k \sub V(G)$ such that for every $1 \leq i \leq k$ there is an $F_i \in \F$ with $F_i \leq G[Z_i]$, or there is a $X \sub V(G)$ with $|X| \leq f(k)$ such that $F \not \leq G - X$ for every $F \in \F$. When~$\G$ is the class of all graphs, we simply say that~$\F$ \emph{has the Erd\H{o}s-P\'{o}sa property with respect to~$\leq$}. We write $F \sub G$ if~$F$ is isomorphic to a subgraph of~$G$ and $F \indsub G$ if~$F$ is isomorphic to an \emph{induced} subgraph of~$G$.
	
	The theorem of Erd\H{o}s and P\'{o}sa then asserts that the class of cycles has the Erd\H{o}s-P\'{o}sa property with respect to~$\sub$. This implies that cycles also have the Erd\H{o}s-P\'{o}sa property with respect to~$\indsub$. It is known that for every~$\ell$, the class of cycles of length~$> \ell$ has the Erd\H{o}s-P\'{o}sa property with respect to~$\sub$, see~\cite{thomassen88posa, birmele07posa, mousset17posa}. Recently, Kim and Kwon~\cite{kim18holes} proved that cycles of length~$>3$ possess the Erd\H{o}s-P\'{o}sa property with respect to~$\indsub$:
	
	\begin{theorem}[\cite{kim18holes}] \label{kim kwon ep precise}
		There exists a constant~$c$ such that for every integer~$k$, every graph~$G$ either contains~$k$ vertex-disjoint chordless cycles of length~$>3$ or a set~$X$ of at most $ck^2 \log k$ vertices such that $G - X$ is chordal.
	\end{theorem}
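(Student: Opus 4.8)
The plan is to establish directly the packing--covering dichotomy that underlies the Erd\H{o}s--P\'{o}sa statement. Write $\nu(G)$ for the maximum number of vertex-disjoint holes (chordless cycles of length~$>3$) and $\tau(G)$ for the minimum size of a set~$X$ with $G-X$ chordal; trivially $\nu \le \tau$, and the content of the theorem is that $\tau \le c k^2 \log k$ whenever $\nu < k$. The guiding observation is that neither parameter is governed by tree-width alone: a large clique has unbounded tree-width yet contains no hole whatsoever, so any argument must be sensitive to chordality, whereas a large complete bipartite graph is hole-rich. These are exactly the two extremes one must tame, and conveniently both are captured by forbidding a single biclique.

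First I would show that very large tree-width forces many disjoint holes. If $G \supseteq K_{2k,2k}$ with parts $\{a_1,\dots,a_{2k}\}$ and $\{b_1,\dots,b_{2k}\}$, then each $a_{2i-1}b_{2i-1}a_{2i}b_{2i}$ is an induced $C_4$, and these $k$ cycles are vertex-disjoint holes, so $\nu \ge k$. Hence I may assume $G$ is $K_{2k,2k}$-free, which simultaneously bounds the clique number, since $K_n \supseteq K_{2k,2k}$ once $n \ge 4k$. Now Theorem~\ref{main theorem} enters in its contrapositive form: with $F = K_{2k,2k}$ and $\ell = 4$, there is a bound $W_0 = W_0(k)$ such that every $K_{2k,2k}$-free graph of tree-width~$\ge W_0$ fails to be $4$-chordal, i.e.\ contains a chordless cycle of length~$>4$ --- a hole. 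To upgrade one hole to~$k$ disjoint ones, I would extract, via the (polynomial) grid/wall theorem, a large wall from a graph of sufficiently large tree-width and split it into~$k$ vertex-disjoint sub-walls, each of tree-width~$\ge W_0$. Since a chordless cycle of an induced subgraph $G[S]$ is automatically chordless in~$G$, applying the bound above to each $G[S]$ with $S$ the vertex set of a sub-wall yields $k$ disjoint holes. This produces a threshold $W = W(k)$ with $\tw(G) \ge W \Rightarrow \nu(G) \ge k$.

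It remains to treat the bounded case $\tw(G) < W$ and produce a hitting set of size $O(k^2 \log k)$. Here I would exploit a tree-decomposition of width~$<W$: holes are connected induced subgraphs, so each lives on a connected part of the decomposition tree, and in a host of bounded tree-width such families satisfy the Erd\H{o}s--P\'{o}sa property with a ratio depending only on~$W$. Two natural mechanisms deliver the $\log k$ factor. One is a fractional relaxation: bound the fractional covering number (equal by LP duality to the fractional packing number) polynomially in~$k$, then round it to an integral cover by a greedy $\epsilon$-net argument along the decomposition, losing a factor $O(\log k)$. The other is a recursion of logarithmic depth, repeatedly removing a separator that at least halves the packing number. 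With~$W$ and the grid bound kept polynomial in~$k$, either route is designed to yield $\tau(G) \le c k^2 \log k$.

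The principal obstacle is the packing step: extracting $k$ genuinely \emph{disjoint} and \emph{induced} holes (rather than a single one) from large tree-width in a biclique-free graph, while keeping the tree-width threshold~$W$ near-linear in~$k$ so that the final exponent is right --- the crude wall-partition above almost certainly wastes too much, and the tight bound will demand a direct analysis of many disjoint highly linked regions. The second delicate point is the fractional-to-integral conversion: one must verify that the hole hypergraph restricted to a bounded-tree-width host has a small enough dual shatter function for an $O(\log k)$ rounding loss, rather than the $\log$ of the exponentially many holes. I expect these two quantitative steps, and not the qualitative dichotomy, to carry the real weight of the proof.
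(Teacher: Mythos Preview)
This theorem is not proved in the paper at all: it is quoted from Kim and Kwon~\cite{kim18holes} as background, and the paper's own contribution is the \emph{restricted} Erd\H{o}s--P\'{o}sa statement for long holes in $K_{s,s}$-free hosts (Corollary~\ref{erdos posa long holes excl}). So there is no ``paper's own proof'' to compare against; what you have written is an attempt to bootstrap the paper's Corollary~\ref{erdos posa long holes excl} argument into a proof of the unrestricted Kim--Kwon theorem.

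That bootstrap has a fatal gap in its very first step. You claim that if $G\supseteq K_{2k,2k}$ then the $4$-cycles $a_{2i-1}b_{2i-1}a_{2i}b_{2i}$ are \emph{induced} in~$G$, giving $\nu(G)\ge k$. But ``$\supseteq$'' here means subgraph containment, not induced containment, and nothing forbids edges inside the two parts. The clique $K_{4k}$ contains $K_{2k,2k}$ as a subgraph yet has no hole whatsoever, so your implication ``$K_{2k,2k}\subseteq G \Rightarrow \nu(G)\ge k$'' is simply false. Consequently the dichotomy ``either $G$ is $K_{2k,2k}$-free, or $G$ already packs $k$ holes'' collapses, and with it the reduction to the biclique-free case where Theorem~\ref{main theorem} would apply. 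This is not a technicality: large tree-width alone \emph{cannot} force even a single hole (take $K_n$), so any route through a global tree-width threshold must fail for Theorem~\ref{kim kwon ep precise}. This is precisely why the paper only states Corollary~\ref{erdos posa long holes excl} for $K_{s,s}$-free hosts and cites Kim--Kwon for the unrestricted $\ell=3$ case.

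Even setting the gap aside, the quantitative target $ck^2\log k$ is out of reach by your method. The threshold $W_0(k)$ supplied by Theorem~\ref{main theorem} passes through the K\"uhn--Osthus bound (Theorem~\ref{kuhn osthus}) and the block-to-subdivision lemma, and is nowhere near linear in~$k$; feeding it through the grid theorem and then the Helly/tree-decomposition covering step (which in the paper gives only the crude bound $r(k-1)$, not an $O(\log k)$ rounding) would yield a tower-type or at best large-polynomial function of~$k$. The genuine $k^2\log k$ bound in~\cite{kim18holes} comes from a direct structural analysis of holes, not from tree-width of the ambient graph.
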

	
	 In contrast, Kim and Kwon~\cite{kim18holes} showed that, for any given $\ell \geq 4$, cycles of length~$> \ell$ do \emph{not} have the Erd\H{o}s-P\'{o}sa property with respect to~$\indsub$. For any given~$n$, they constructed a graph~$G_n$ with no two disjoint chordless cycles of length~$> \ell$, for which no set of fewer than~$n$ vertices meets every chordless cycle of length~$>\ell$ in~$G_n$. This graph~$G_n$ contains a copy of~$K_{n,n}$. We show that this is essentially necessary:

	
	\begin{corollary} \label{erdos posa long holes excl}
		For all integers~$\ell$ and~$s$, the class of cycles of length~$> \ell$ has the Erd\H{o}s-P\'{o}sa property for the class of $K_{s,s}$-free graphs with respect to~$\indsub$.
	\end{corollary}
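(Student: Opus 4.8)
The plan is to establish the packing/covering statement of Corollary~\ref{erdos posa long holes}, of which Corollary~\ref{erdos posa long holes excl} is only the restatement in Erd\H{o}s--P\'{o}sa language: disjoint sets $Z_1,\dots,Z_k$ with each $G[Z_i]$ containing an induced cycle of length~$>\ell$ are the same as $k$ vertex-disjoint chordless cycles of length~$>\ell$, while a set~$X$ for which $G-X$ has no induced cycle of length~$>\ell$ is the same as a set~$X$ with $G-X$ \mbox{$\ell$-chordal}. It is worth noting at the outset that meeting every chordless cycle of length~$>\ell$ of~$G$ already forces $G-X$ to be \mbox{$\ell$-chordal}: any chordless cycle of~$G-X$ avoids~$X$, and since its potential chords would have both endvertices outside~$X$, it is chordless in~$G$ as well. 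The driving input is Theorem~\ref{main theorem} with $F=K_{s,s}$, which provides a constant $w=w(\ell,s)$ such that every \mbox{$\ell$-chordal} $K_{s,s}$-free graph has \tw\ less than~$w$; equivalently, every $K_{s,s}$-free graph of \tw\ at least~$w$ contains a chordless cycle of length~$>\ell$.

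The first and main step would be to upgrade this single cycle to a packing: I would show that there is a function~$\phi$ so that every $K_{s,s}$-free graph~$G$ of \tw\ at least~$\phi(k)$ contains $k$ vertex-disjoint chordless cycles of length~$>\ell$. For this I would invoke the Grid Theorem (see~\cite{DiestelBook16noEE}): sufficiently large \tw\ forces a grid minor large enough to be tiled into~$k$ vertex-disjoint subgrid minors, each of \tw\ at least~$w$. Taking $U_1,\dots,U_k$ to be the disjoint unions of branch sets in the tiles, every induced subgraph~$G[U_i]$ has such a subgrid as a minor, hence \tw\ at least~$w$, and is still $K_{s,s}$-free; by the equivalent form of Theorem~\ref{main theorem}, each~$G[U_i]$ contains a chordless cycle of length~$>\ell$. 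These cycles lie in disjoint vertex sets and, being chordless within their respective~$G[U_i]$, are chordless in~$G$, so together they form the required packing.

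It then remains to treat graphs of bounded \tw, say $\tw(G)<t:=\phi(k)$, where I would build the hitting set by a bottom-up peeling argument. Fix a tree-decomposition of width~$<t$, root its tree, and repeatedly select a chordless cycle of length~$>\ell$ whose topmost node is as deep as possible; add it to the packing, place the (at most~$t$) vertices of that node's bag into~$X$, and delete the subtree below that node. Choosing the topmost node deepest ensures that any chordless cycle of length~$>\ell$ lying entirely in the deleted region must pass through the deleted bag, so nothing is lost and the selected cycles stay pairwise disjoint; when no cycle remains, $X$ meets all chordless cycles of length~$>\ell$. After at most $k-1$ rounds one thus obtains either $k$ disjoint cycles or a hitting set of size at most $t(k-1)$. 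Combining the regimes finishes the proof: if $\tw(G)\ge\phi(k)$ the first step yields the packing, and otherwise the peeling yields $k$ disjoint cycles or a hitting set of size at most $\phi(k)(k-1)=:f(k)$.

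I expect the peeling to be routine --- it works in any graph and uses the width bound only to bound the cost of each round --- so the real obstacle is the first step, where $K_{s,s}$-freeness and Theorem~\ref{main theorem} do the work of turning large \tw\ into many disjoint long chordless cycles. The subtle point there is to produce the pieces~$G[U_i]$ as \emph{induced} subgraphs of \tw\ at least~$w$, so that Theorem~\ref{main theorem} applies to each of them and the cycles it returns are genuinely chordless in~$G$; the Grid Theorem supplies such pieces directly, though the resulting~$\phi$, and hence~$f$, will be very far from best possible.
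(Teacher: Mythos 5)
Your proposal is correct and follows essentially the same route as the paper: Theorem~\ref{main theorem} plus the Grid Minor Theorem (which the paper invokes in the packaged form of Theorem~\ref{tree-width partition}) handles the large-tree-width regime, and the bounded-tree-width regime is settled via the packing--covering duality for the subtrees $T_C$ of the decomposition tree, which your greedy peeling argument proves inline where the paper simply cites the existence of a small set $S$ of nodes meeting all these subtrees.
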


	This follows from Theorem~\ref{main theorem} by a standard argument. Since the proof is quite short, we provide it for the sake of completeness. First, recall the following consequence of the Grid Minor Theorem of Robertson and Seymour~\cite{GMV}.
	
	\begin{theorem}[\cite{GMV}] \label{tree-width partition}
		For all positive integers~$p$ and~$q$ there exists an~$r$ such that for every graph~$G$ with tree-width~$\geq r$, there are disjoint $Z_1, \ldots, Z_p \sub V(G)$ such that~$G[Z_i]$ has tree-width~$\geq q$ for every $1 \leq i \leq p$.
	\end{theorem}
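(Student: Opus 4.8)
The plan is to deduce this partition statement from the Grid Minor Theorem in its standard reformulation: there is a function~$g$ such that every graph of tree-width~$\geq g(N)$ contains the $(N \times N)$-grid as a minor. This is the form of~\cite{GMV} I would invoke. The core idea is that a single large grid minor already contains many vertex-disjoint smaller grids, and that tree-width is minor-monotone, so one can read off the required parts $Z_1, \ldots, Z_p$ directly from the branch sets of one grid minor.

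First I would fix the dimensions. Recall that the $(a \times a)$-grid has tree-width exactly~$a$; I only need the lower bound. Choose $N := q \cdot \lceil \sqrt{p} \rceil$, so that the $(N \times N)$-grid can be tiled into $\lceil \sqrt p \rceil^2 \geq p$ pairwise vertex-disjoint $(q \times q)$-subgrids, and set $r := g(N)$. Now let~$G$ be any graph with $\tw(G) \geq r$. By the Grid Minor Theorem, $G$ contains the $(N \times N)$-grid~$M$ as a minor, witnessed by pairwise disjoint connected branch sets $(B_v)_{v \in V(M)}$ with $B_v \sub V(G)$, together with, for every edge $uv \in E(M)$, an edge of~$G$ between~$B_u$ and~$B_v$.

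Next I would cut the grid into pieces. Let $S_1, \ldots, S_p \sub V(M)$ be the vertex sets of~$p$ of the disjoint $(q \times q)$-subgrids from the tiling, and put
\[
Z_i := \bigcup_{v \in S_i} B_v \qquad (1 \leq i \leq p).
\]
The~$Z_i$ are pairwise disjoint, since the branch sets are disjoint and the~$S_i$ are disjoint. The key point is that $G[Z_i]$ contains the $(q \times q)$-grid as a minor: each~$B_v$ with $v \in S_i$ is a connected subset of~$Z_i$, and every edge of the $i$-th subgrid joins two vertices of~$S_i$, so the corresponding $G$-edge has both endpoints in~$Z_i$ and hence is present in the induced subgraph $G[Z_i]$. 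Since tree-width does not increase under taking minors, $\tw(G[Z_i]) \geq \tw\bigl( (q \times q)\text{-grid} \bigr) = q$, which is what we want.

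The only genuine subtlety, and the step I would treat most carefully, is the passage from the grid \emph{minor} of~$G$ to \emph{induced} subgraphs of large tree-width: one must check that restricting to the union of the branch sets of a subgrid does not destroy that minor. This is exactly the observation that an induced subgraph retains every edge between its vertices, so each edge witnessing the subgrid minor is preserved. Everything else is routine: the choice of~$N$ guaranteeing enough disjoint subgrids, and the standard value~$q$ for the tree-width of the $(q \times q)$-grid.
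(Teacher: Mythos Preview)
Your argument is correct and is exactly the standard derivation the paper has in mind: the paper does not actually supply a proof of this theorem but merely cites it as a ``consequence of the Grid Minor Theorem of Robertson and Seymour~\cite{GMV}''. What you wrote is precisely how one unpacks that citation --- pull a large grid minor out of~$G$, tile it into~$p$ disjoint $(q\times q)$-subgrids, and let each~$Z_i$ be the union of the corresponding branch sets --- so there is nothing to compare beyond noting that you have filled in the details the paper leaves implicit.
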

	
	\begin{proof}[Proof of Corollary~\ref{erdos posa long holes excl}]
		Let~$k$ be an integer. By Theorem~\ref{main theorem} there exists an integer~$t$ such that every \mbox{$\ell$-chordal} graph with tree-width~$\geq t$ contains~$K_{s,s}$. By Theorem~\ref{tree-width partition}, there exists an~$r$ such that every graph with tree-width~$> r$ has~$k$ vertex-disjoint subgraphs of tree-width~$\geq t$.

		Let~$G$ be a $K_{s,s}$-free graph. We show that either~$G$ contains~$k$ disjoint chordless cycles of length~$>\ell$ or there is a set of at most $r(k-1)$ vertices whose deletion leaves an \mbox{$\ell$-chordal} graph.
		
		Suppose first that the tree-width of~$G$ was greater than~$r$. Let $Z_1, \ldots, Z_k$ be disjoint sets of vertices such that $G[Z_i]$ has tree-width~$\geq t$ for every~$i$. Then, by Theorem~\ref{main theorem}, every~$G[Z_i]$ must contain a chordless cycle of length~$> \ell$, since $K_{s,s} \not \sub G[Z_i]$. Therefore~$G$ contains~$k$ disjoint chordless cycles of length~$> \ell$.
		
		Suppose now that~$G$ had a \td\ $(T, \V)$ of width~$< \! r$. For every chordless cycle $C \sub G$ of length~$> \ell$, let $T_C \sub T$ be the subtree of all $t \in T$ with $V_t \cap V(C) \neq \emptyset$. If there are~$k$ disjoint such subtrees $T_{C^1}, \ldots, T_{C^k}$, then $C^1, \ldots, C^k$ are also disjoint and we are done. Otherwise, there exists $S \sub V(T)$ with $|S| < k$ which meets every subtree~$T_C$. Then $Z := \bigcup_{s \in S} V_s$ meets every chordless cycle of length~$> \ell$ in~$G$ and $|Z| \leq r(k-1)$.
		
	\end{proof}

	\end{section}

	\begin{section}{Open problems} \label{sec: problems}
	
	A large amount of research is dedicated to the study of \emph{$\chi$-boundedness} of graph classes, introduced by Gy\'{a}rf\'{a}s~\cite{gyarfas87conj}. Here, a class~$\G$ of graphs is called \emph{$\chi$-bounded} if there exists a function~$f$ so that for every integer~$k$ and $G \in \G$, either~$G$ contains a clique on~$k+1$ vertices or~$G$ is $f(k)$-colourable. This is a strengthening of the statement that chromatic number is a local parameter for~$\G$, with cliques being the only bounded-order subgraphs to look for.
	
	As we have seen, cliques are not the only reasonable local obstruction to having small tree-width. Nontheless, we may still ask 
	\begin{enumerate}
		\item For which classes of graphs is tree-width a local parameter?
		\item What kind of bounded-order subgraphs can we force on these classes?
		\item For which classes can we force large cliques by assuming large tree-width?
	\end{enumerate}
	
	We have seen in Section~\ref{sec: erdos posa} that long chordless cycles have the Erd\H{o}s-P\'{o}sa property for the class of $K_{s,s}$-free graphs. For which other classes is this true? Kim and Kwon~\cite{kim18holes} raised this question for the class of graphs without chordless cycles of length four.

	\end{section}

\bibliographystyle{plain}
\bibliography{collective}
     	
     \end{document}